\DeclareMathOperator{\Li}{Li}
\newtheorem{theorem}{Theorem}
\newtheorem{proposition}[theorem]{Proposition}
\newtheorem{conjecture}[theorem]{Conjecture}
\newtheorem{corollary}[theorem]{Corollary}
\title[On the Goncharov depth conjecture and polylogarithms of depth two]{On the Goncharov depth conjecture \\ and polylogarithms of depth two}
\author[Charlton]{Steven Charlton}
\address{Fachbereich Mathematik (AZ), Universit\"at Hamburg, Bundesstra\textup{\ss}e 55, 20146 Hamburg, Germany}
\email{steven.charlton@uni-hamburg.de}
\author[Gangl]{Herbert Gangl}
\address{Department of Mathematical Sciences, Durham University, Durham DH1 3LE, United Kingdom}
\email{herbert.gangl@durham.ac.uk}
\author[Radchenko]{Danylo Radchenko}
\address{Laboratoire Paul Painlev\'e, Universit\'e de Lille, F-59655 Villeneuve d'Ascq,	France}
\email{danradchenko@gmail.com}
\author[Rudenko]{Daniil Rudenko}
\address{Department of Mathematics, University of Chicago, 5801 S Ellis Ave, 60637 Chicago, IL, USA}
\email{rudenkodaniil@gmail.com}
\date{\today}
\renewcommand{\sectionautorefname}{\S{}\@gobble}
\def\equationautorefname~#1\null{(#1)\null}
\begin{document}

\maketitle
\begin{abstract}
 We prove the surjectivity part of Goncharov's depth conjecture. We also show that the depth conjecture implies that multiple polylogarithms of depth $d$ and weight $n$ can be expressed via a single function $\Li_{n-d+1,1,\dots,1}(a_1,a_2,\dots,a_d)$, and we prove this latter statement for $d=2$.  
\end{abstract}

\section{Introduction}\label{SectionIntroduction}

Multiple polylogarithms are multivalued functions of variables
$a_1,\dots,a_d\in \mathbb{C}$ 
depending on positive integer parameters $n_1,\dots,n_d\in \mathbb{N}.$ In the polydisc   $|a_1|,|a_2|,\dots, |a_d| <1$ polylogarithms are defined by power series 
\[
\Li_{n_1,n_2,\dots, n_d}(a_1,a_2,\dots,a_d)=\sum_{0<m_1<m_2<\dots<m_d}\frac{a_1^{m_1} a_2^{m_2}\dots a_d^{m_d}}{m_1^{n_1}m_2^{n_2}\dots m_d^{n_d}}\,.
\]
The number $n=n_1+\dots+n_d$ is called the weight of the multiple polylogarithm, and the number $d$ is called its depth. Goncharov suggested an ambitious conjecture, giving a necessary and sufficient condition for a sum of polylogarithms to have certain depth. In \autoref{SectionSurjectivity} we show that the Goncharov depth conjecture would have the following remarkable corollary.

\begin{conjecture}\label{ConjectureMain}
Any multiple polylogarithm 
of weight $n\geq 2$ and depth $d\geq 2$ can be expressed as a linear combination of multiple polylogarithms 
$\Li_{n-d+1,1,\dots,1}$
and products of polylogarithms of lower weight.
\end{conjecture}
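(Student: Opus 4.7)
The plan is to lift the entire problem to the motivic Lie coalgebra $\mathcal{L}_\bullet$, where modding out by products of lower weight polylogarithms happens automatically, and then to phrase the corollary as a surjectivity statement for a single family of classes in the depth-filtered pieces of $\mathcal{L}_n$. Write $\mathcal{L}_n^{(d)}$ for the subspace of depth $\leq d$, and set $\mathrm{gr}_d\,\mathcal{L}_n := \mathcal{L}_n^{(d)}/\mathcal{L}_n^{(d-1)}$. The claim becomes that $\mathrm{gr}_d\,\mathcal{L}_n$ is spanned by the classes of $\Li_{n-d+1,1,\ldots,1}(a_1,\ldots,a_d)$ as the $a_i$ vary, together with the analogous statement at all lower depths.

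I would argue by induction on $d$, the case $d=2$ being proved later in the paper. For the inductive step, pick an arbitrary depth-$d$ weight-$n$ polylogarithm $L=\Li_{n_1,\ldots,n_d}(a_1,\ldots,a_d)$ and examine its reduced cobracket. The cobracket $\delta$ preserves the depth filtration and induces
\[
\bar\delta : \mathrm{gr}_d\,\mathcal{L}_n \longrightarrow \bigoplus_{\substack{a+b=n \\ a'+b'=d}} \mathrm{gr}_{a'}\,\mathcal{L}_a \wedge \mathrm{gr}_{b'}\,\mathcal{L}_b\,.
\]
Goncharov's depth conjecture gives sharp control over the kernel of $\bar\delta$, so it suffices to match $\bar\delta([L])$ against $\bar\delta$ applied to a combination of the special family. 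Since every tensor factor appearing on the right-hand side has depth strictly less than $d$, the inductive hypothesis rewrites each such factor in terms of the special polylogarithm at smaller depth, and the matching reduces to a finite-dimensional linear-algebra problem.

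The main obstacle will be the surjectivity step: one must show, using Goncharov's explicit coproduct formula, that as the parameters $a_i$ range freely the cobracket classes $\bar\delta([\Li_{n-d+1,1,\ldots,1}(a_1,\ldots,a_d)])$ already exhaust the image of $\bar\delta$ predicted by the result proved in \autoref{SectionSurjectivity}. I expect this to come down either to producing explicit functional equations among $\Li_{n-d+1,1,\ldots,1}$'s realizing a distinguished spanning set of depth-$d$ cobracket elements, or to a structural comparison with the generators of $\mathrm{gr}_d\,\mathcal{L}_n$ furnished by iterated integrals on $\mathbb{P}^1 \setminus \{0,a_1,\ldots,a_d,\infty\}$, specialized to the one-parameter slice corresponding to the index pattern $(n-d+1,1,\ldots,1)$.
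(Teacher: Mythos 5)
Your framework is the right one --- the paper also deduces \autoref{ConjectureMain} from \autoref{ConjectureDepth} by working in the depth-graded Lie coalgebra, where products of lower weight are invisible, and reducing everything to a spanning statement for a single family of depth-$d$ elements. But your proposal stops exactly where the mathematical content begins: you flag the surjectivity step as ``the main obstacle'' and offer only two guesses at how it might go, neither of which is carried out. That step is not a routine finite-dimensional matching problem; it is the entire substance of \autoref{TheoremTwo}. The paper resolves it by (i) computing the $(d-1)$-fold iterated truncated cobracket of $\Li^{\mathcal{L}}_{n-d\,;\,1,\dots,1}(a_1,\dots,a_d)$ explicitly as the full symmetric sum $\sum_{n_1+\dots+n_d=n,\ n_i\ge 2}\Li^{\mathcal{L}}_{n_1}(a_1)\otimes\dots\otimes\Li^{\mathcal{L}}_{n_d}(a_d)$ inside the cofree Lie coalgebra on $\bigoplus_{n\ge2}\mathcal{B}_n(\mathrm{F})$, and then (ii) separating the individual compositions $(n_1,\dots,n_d)$ out of this sum by averaging the last argument over $2^s$-th roots, which by the distribution relations rescales the $n_d$-component by $2^{-s(n_d-1)}$, and inverting a Vandermonde-type matrix as $s$ varies. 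This is also where the hypothesis that $\mathrm{F}$ be quadratically closed (equivalently, the $N$-th roots in the expected form of the identity) enters; your sketch never accounts for why roots of the $a_i$ should appear at all.

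Two further structural discrepancies. First, your induction on $d$ via the single cobracket $\bar\delta$ into $\bigwedge^2$ is not how the paper argues: the deduction of \autoref{ConjectureMain} is uniform in $d$, using the full iterated map $\overline{\Delta}^{[d-1]}$ of \eqref{FormulaIteratedCoproduct} together with the assumed isomorphism statement of \autoref{ConjectureDepth}; no appeal to the $d=2$ case of \autoref{TheoremOne} is needed, and the kernel control you invoke for $\bar\delta$ is really only provided by the conjecture for the \emph{iterated truncated} cobracket. Second, the natural generators produced by the cobracket computation are $\Li^{\mathcal{L}}_{n-d\,;\,1,\dots,1}(a_1,\dots,a_d)$, not $\Li_{n-d+1,1,\dots,1}$; the paper needs an extra (easy but necessary) step, namely the shuffle relations expressing $\Li^{\mathcal{L}}_{n-d\,;\,1,\dots,1}$ via $\Li^{\mathcal{L}}_{0\,;\,n-d+1,1,\dots,1}$ modulo lower depth, which is absent from your outline. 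Without the distribution-relation/Vandermonde mechanism and this shuffle step, the proposal is a plausible plan but not a proof of the implication.
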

We expect that there exists a presentation where all the arguments are Laurent monomials in $\sqrt[N]{a_1}, \dots, \sqrt[N]{a_d}$ for sufficiently large $N.$  We show that \autoref{ConjectureMain} is true for $d=2$.

\begin{theorem}\label{TheoremOne} For every $0<k<n\in\mathbb{N}$ there exists $N\in\mathbb{N}$ such that the multiple polylogarithm $\Li_{k,n-k}(x,y)$  can be expressed as a linear combination of functions 
\[
\Li_{n-1,1}(\sqrt[N]{\mathllap{\phantom{xy}}x}^{\,r} \sqrt[N]{\mathllap{\phantom{xy}}y}^{\,s},\sqrt[N]{\mathllap{\phantom{xy}}x}^{\,t} \sqrt[N]{\mathllap{\phantom{xy}}y}^{\,u} ) \quad \text{\ for\ \  } r,s,t,u \in \mathbb{Z}
\]
and products of classical polylogarithms, where each appearance of $\root{N}\of{z}$ denotes any $N$th root of $z$.
\end{theorem}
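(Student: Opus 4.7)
The plan is to work modulo products of classical polylogarithms of lower weight, i.e., in the depth-$2$ part of the weight-$n$ motivic Lie coalgebra $\mathcal{L}$, and to match cobrackets there. In that quotient, the class of $\Li_{k,n-k}(x,y)$ is determined by its cobracket $\delta$, which takes values in $\bigoplus_{a+b=n,\,a,b\geq 1}\mathcal{L}_a\wedge\mathcal{L}_b$. If one can produce a linear combination of the $\Li_{n-1,1}(u_i,v_i)$ whose cobracket matches that of $\Li_{k,n-k}(x,y)$, their difference lies in the kernel of $\delta$ and is therefore, modulo constants, a product of classical polylogarithms of lower weight -- exactly the conclusion of the theorem.

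The first concrete step is to compute $\delta\Li_{k,n-k}(x,y)$ and $\delta\Li_{n-1,1}(u,v)$ when $u,v$ are Laurent monomials in $\sqrt[N]{x},\sqrt[N]{y}$; in both cases this is a standard iterated-integral calculation producing sums of terms $\{f\}_a\wedge\{g\}_b$, where $\{h\}_m := [\Li_m(h)]\in\mathcal{L}_m$. The heart of the proof is then to solve the matching problem
\[
\delta\Li_{k,n-k}(x,y)=\sum_i c_i\,\delta\Li_{n-1,1}(u_i,v_i).
\]
Here the $N$-th roots become essential: with only integer monomials in $x$ and $y$ on the right, one cannot produce the classes $\{1-x^p y^q\}_a$ that appear in the left-hand cobracket, while the distribution relations $\sum_{\zeta^N=1}\Li_m(\zeta z)=N^{1-m}\Li_m(z^N)$, applied to monomials in roots of $x$ and $y$, supply exactly the extra degrees of freedom needed to span the target. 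The overall scheme is inductive on the weight $n$: at weight $n$ I assume the statement for all smaller weights, compute cobrackets in terms of weight-$<n$ polylogs (where the induction applies), and only have to match a single ``top'' contribution.

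The main obstacle I expect is this matching step: organising the non-monomial classes $\{1-\cdots\}_a$ appearing in $\delta\Li_{k,n-k}(x,y)$ into a manageable basis, and showing that the cobrackets $\delta\Li_{n-1,1}(u,v)$ at monomial root arguments span a subspace large enough to contain them. Once $\delta$ has been matched, the closing step is routine: a primitive in the depth-$2$ part of the weight-$n$ motivic Lie coalgebra is, modulo constants, a product of lower-weight polylogs, and the induction finishes. As a fallback, if the motivic matching proves too rigid, I would iterate shuffle and stuffle relations applied to products $\Li_a(u)\Li_b(v)$ at suitable monomial root arguments to reduce $\Li_{k,n-k}$ step by step towards $\Li_{n-1,1}$ form, using distribution to eliminate the unwanted $\Li_{k',n-k'}$ with $1<k'<n-1$ that arise.
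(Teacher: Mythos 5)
Your proposal is a strategy outline rather than a proof: the step you yourself flag as ``the main obstacle'' --- showing that the cobrackets $\delta\Li_{n-1,1}(u,v)$ at Laurent-monomial root arguments span a subspace containing $\delta\Li_{k,n-k}(x,y)$ --- is precisely the entire content of the theorem, and nothing in the proposal makes progress on it. The difficulty is concentrated in the $\mathcal{L}_{n-1}\wedge\mathcal{L}_1$ component of the cobracket, where non-monomial classes such as $\{1-x^py^q\}_{n-1}\wedge\{x\}_1$ appear; the distribution relations do give you extra parameters, but you offer no argument (dimension count, explicit ansatz, or otherwise) that they suffice, and the inductive scheme ``match a single top contribution'' is not substantiated. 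Moreover, even if the matching were carried out, the closing step ``an element killed by $\delta$ is, modulo constants, a product of lower-weight polylogarithms'' requires the motivic/Hodge-theoretic rigidity argument and at best yields an identity of motivic avatars, or of functions up to undetermined constants; a further step would be needed to land on the clean analytic identity the theorem asserts.

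For comparison, the paper's proof is completely elementary and bypasses cobrackets altogether. It introduces the generating series $L(x,y\mid t_1,t_2)=\sum_{k,l>0}\Li_{k,l}(x/y,y)\,t_1^{k-1}t_2^{l-1}$ and proves, by direct manipulation of the double power series and a splitting of the summation range according to $m=n$, $m>n$, the exact identity \eqref{eq:main1}: a sum of specializations $L(\,\cdot\,,\cdot\mid ct,0)$ over roots $X^\alpha=x$, $Y^\beta=y$, $Z^\gamma=xy$ equals $L(xy,x\mid -\alpha t,\beta t)$ plus depth-one terms. Since $L(\,\cdot\,,\cdot\mid ct,0)$ generates only $\Li_{n-1,1}$'s, extracting the coefficient of $t^{n-2}$ shows that $U_n^{\alpha,\beta}(x,y)=\sum_{k+l=n}\Li_{k,l}(y,x)(-\alpha)^{k-1}\beta^{l-1}$ is a combination of $\Li_{n-1,1}$'s and $\Li_n$'s, and a Vandermonde-type invertibility argument over the choices $\alpha+\beta=n$ then isolates each individual $\Li_{k,l}$. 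If you wish to salvage your approach, this explicit generating-function identity is exactly the missing ingredient that solves your matching problem in closed form.
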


Here is an example of this type of identity in weight four and depth two
   \begin{align*}
      \Li_{2,2}(x,y)= {} & 
       {-}4 \Li_{3,1}\Big({-}\frac{\sqrt{x}}{\sqrt{y}},y\Big)-4 \Li_{3,1}\Big(\frac{\sqrt{x}}{\sqrt{y}},y\Big)
      +4 \Li_{3,1}\Big({-}\frac{\sqrt{y}}{\sqrt{x}},x\Big)+4 \Li_{3,1}\Big(\frac{\sqrt{y}}{\sqrt{x}},x\Big)
      \\
      &
     {} + \Li_{3,1}(x,y)-\Li_{3,1}(y,x)
       -\Li_{3,1}\Big(\frac{y}{x},x\Big)-\frac{1}{2} \Li_4(x y)+\Li_1(x) \Li_3(y) \,.
   \end{align*}

In \autoref{SectionProofTheoremOne} we give an elementary proof of \autoref{TheoremOne}. In \autoref{SectionSurjectivity} we recall the statement of Goncharov's depth conjecture and prove a part of it (\autoref{TheoremTwo}). Next, we show that the depth conjecture implies \autoref{ConjectureMain}.

\section{Proof of \autoref{TheoremOne}}\label{SectionProofTheoremOne}
We define $L(x,y\mid t_1,t_2)$ to be the following generating function
    \[L(x,y\mid t_1,t_2) := \sum_{k,l>0}\Li_{k,l}(x/y,y)t_1^{k-1}t_2^{l-1} = \sum_{m,n>0}\frac{x^my^n}{(m-t_1)(m+n-t_2)}\,.\]
The key observation used in the proof of \autoref{TheoremOne} is the following identity.
\begin{proposition}
	For any integers $\alpha,\beta,\gamma>0$ with $\gamma=\alpha+\beta$ and any $x,y$ with $|x|,|y|<1$ we have
	\begin{multline} \label{eq:main1}
		\sum_{\substack{X^{\alpha}=x, Y^{\beta}=y,\\ Z^{\gamma}=xy}}\bigg[
		   \frac{1}{\gamma} L(X,Y\mid    \alpha\beta t, 0)
		  -\frac{1}{\alpha} L(Z,Y\mid    \gamma\beta t, 0)
		  +\frac{1}{\beta}  L(Z,X\mid {-}\gamma\alpha t, 0)\bigg]\\[-3ex]
		= L(xy,x\mid {-}\alpha t,\beta t) + \frac{1}{\gamma t}\sum_{k\ge 2}\Li_{k}(xy)(\beta t)^{k-1}\,.
	\end{multline}
\end{proposition}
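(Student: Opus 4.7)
The plan is to compare both sides as power series in $x$ and $y$ --- all sums converge absolutely for $|x|,|y|<1$, so the identity reduces to a purely algebraic equality of power-series coefficients. Using the definition $L(x,y\mid t_1,t_2)=\sum_{m,n>0}\frac{x^m y^n}{(m-t_1)(m+n-t_2)}$ and the orthogonality $\sum_{X^{\alpha}=x}X^m=\alpha\,x^{m/\alpha}$ when $\alpha\mid m$ (and zero otherwise), together with its analogues for the $\beta$-th roots of $y$ and the $\gamma$-th roots of $xy$, I would first unpack each of the three root-sums on the left-hand side. Since each summand depends on only two of $X,Y,Z$, the free third summation cancels the corresponding prefactor $1/\gamma$, $1/\alpha$, $1/\beta$. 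After a change of variables $m\to\alpha m,\ n\to\beta n$ in the first sum (and the analogous substitutions in the other two), the LHS collapses to
\[
\beta\sum_{m,n>0}\frac{x^m y^n}{(m-\beta t)(\alpha m+\beta n)}
-\beta\sum_{m,n>0}\frac{x^m y^{m+n}}{(m-\beta t)(\gamma m+\beta n)}
+\alpha\sum_{m,n>0}\frac{x^{m+n}y^m}{(m+\alpha t)(\gamma m+\alpha n)}.
\]

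Next I would reindex the second and third sums via $M:=m+n$ to bring everything into a uniform $(x^p,y^q)$ form; using $\gamma=\alpha+\beta$ the denominators simplify as $\gamma m+\beta(M-m)=\alpha m+\beta M$ and $\gamma m+\alpha(M-m)=\beta m+\alpha M$. Reading off the coefficient of $x^p y^q$ on the LHS, the first and second sums cancel exactly when $p<q$ (both contribute $\pm\tfrac{\beta}{(p-\beta t)(\alpha p+\beta q)}$) and the third is absent there, so the total is $0$; when $p=q$ only the first sum contributes, giving $\tfrac{\beta}{\gamma p(p-\beta t)}$; when $p>q$ the first and third together give $\tfrac{\beta}{(p-\beta t)(\alpha p+\beta q)}+\tfrac{\alpha}{(q+\alpha t)(\alpha p+\beta q)}$. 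On the RHS, $L(xy,x\mid-\alpha t,\beta t)$ directly yields $\tfrac{1}{(q+\alpha t)(p-\beta t)}$ for $p>q$ (and $0$ otherwise), while the correction $\tfrac{1}{\gamma t}\sum_{k\ge2}\Li_k(xy)(\beta t)^{k-1}$ sums geometrically in $k$ to $\tfrac{\beta}{\gamma}\sum_m\frac{(xy)^m}{m(m-\beta t)}$, contributing exactly $\tfrac{\beta}{\gamma p(p-\beta t)}$ on the diagonal $p=q$ and nothing elsewhere.

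The three case checks then complete the proof. The cases $p<q$ and $p=q$ are immediate. The $p>q$ case is the only substantive one, and reduces to the partial-fraction identity $\tfrac{\beta}{p-\beta t}+\tfrac{\alpha}{q+\alpha t}=\tfrac{\alpha p+\beta q}{(p-\beta t)(q+\alpha t)}$, which follows from the one-line computation $\beta(q+\alpha t)+\alpha(p-\beta t)=\alpha p+\beta q$, the $\alpha\beta t$ cross terms cancelling. There is no real obstacle in this scheme: the heart of the matter is a single partial-fraction step, and the rest is bookkeeping for the root-of-unity distribution relation and the substitutions $m\to\alpha m$, $M=m+n$. Conceptually, the three root-sums on the LHS jointly produce an off-diagonal piece matching $L(xy,x\mid-\alpha t,\beta t)$ via partial fractions, with a residual diagonal defect that is precisely absorbed by the geometric-series correction $\tfrac{1}{\gamma t}\sum_{k\ge2}\Li_k(xy)(\beta t)^{k-1}$ --- a structural feature worth pointing out before launching into the calculation.
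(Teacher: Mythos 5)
Your proof is correct and follows essentially the same route as the paper's: expand everything as a power series using the root-of-unity distribution relations, split according to the relative size of the exponents of $x$ and $y$, and finish with the partial-fraction identity $\beta(q+\alpha t)+\alpha(p-\beta t)=\alpha p+\beta q$ off the diagonal and the geometric series in $k$ on the diagonal. The only cosmetic difference is that you read off coefficients of $x^p y^q$ case by case, whereas the paper keeps the summation indices and regroups the three sums into the pieces $m=n$ and $m>n$.
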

\begin{proof}
Note that
	\[\sum_{\substack{X^\alpha=x\\ Y^{\beta}=y}}L(X,Y\mid t_1,t_2)=
	\sum_{m,n>0}\frac{\alpha\beta x^{m}y^{n}}{(m\alpha-t_1)(m\alpha+n\beta-t_2)}\,.\]
From this we calculate that the LHS of~\eqref{eq:main1} is equal to
\begin{align*}
	& \sum_{m,n>0}
	\Big[\frac{\beta x^{m}y^{n}}{(m-\beta t)(m\alpha+n\beta)}
	-\frac{\beta x^my^{m+n}}{(m-\beta t)(m\alpha+(m+n)\beta)}
	+\frac{\alpha x^{m+n}y^{m}}{(m+\alpha t)((m+n)\alpha+m\beta)}\Big]\\
	&  \hspace{4em}  {} = \sum_{m=n>0}\frac{\beta x^{m}y^{n}}{(m-\beta t)(m\alpha+n\beta)}
	+ \sum_{m>n>0}\Big[\frac{\beta x^{m}y^{n}}{(m-\beta t)(m\alpha+n\beta)}
	+\frac{\alpha x^{m}y^{n}}{(n+\alpha t)(m\alpha+n\beta)}\Big]\\
	& \hspace{4em} {}= \frac{1}{\gamma t}\sum_{n>0}\Big[\frac{(xy)^{n}}{n-\beta t}-\frac{(xy)^{n}}{n}\Big]
	+\sum_{m>n>0}\frac{x^{m}y^{n}}{(m-\beta t)(n+\alpha t)}\\
	&  \hspace{4em}  {}= \frac{1}{\gamma t}\sum_{k\ge 2}\Li_k(xy)(\beta t)^{k-1} + L(xy,x\mid -\alpha t,\beta t)\,. 
	 \qedhere
\end{align*}
\end{proof}

\begin{proof}[Proof of \autoref{TheoremOne}]
	Expanding both sides of~\eqref{eq:main1} as a power series in~$t$ and comparing the coefficients of $t^{n-2}$ 
	we see that for any integers $\alpha,\beta>0$ the function
	\[U_{n}^{\alpha,\beta}(x,y) := \sum_{\substack{k+l=n,\\k,l>0}}\Li_{k,l}(y,x)(-\alpha)^{k-1}\beta^{l-1}\]
	is expressible in terms of $\Li_{n-1,1}$ and $\Li_{n}$. Since the matrix $((-i)^{k-1}(n-i)^{n-d-1})_{i,k=1}^{n-1}$
	is invertible (its determinant is of Vandermonde type), each individual function $\Li_{k,l}(y,x)$ for $k+l=n$ can be written 
	as a rational linear combination of the functions $U_n^{1,n-1}(x,y),U_n^{2,n-2}(x,y),\dots,U_n^{n-1,1}(x,y)$, and hence it also 
	can be expressed in terms of $\Li_{n-1,1}$ and $\Li_{n}$, as claimed.
\end{proof}

\section{Surjectivity part of the Goncharov depth conjecture}\label{SectionSurjectivity}

To state the Goncharov depth conjecture we recall the definition of the Lie coalgebra $\mathcal{L}_\bullet(\mathrm{F})$ of (formal) polylogarithms with values in a field $\mathrm{F}$ (\cite{GoncharovMSRI}, see also \cite[\S2.1]{MR22}).  The Lie coalgebra  $\mathcal{L}_\bullet(\mathrm{F})$ is positively graded by weight; the component of weight $n$  is generated over $\mathbb{Q}$ by formal symbols $\Li^{\mathcal{L}}_{n_0\,;\,n_1,\dots, n_d}(a_1,\dots,a_d)$ for $n_0\in \mathbb{Z}_{\geq0}, n_1,\dots, n_d\in \mathbb{N}$ with $n_0+n_1+\dots+n_d=n$ and $a_1,\dots,a_d\in \mathrm{F}^{\times},$ which are subject to (mostly unknown) functional equations for polylogarithms. The cobracket $\Delta \colon\mathcal{L}_\bullet(\mathrm{F})\longrightarrow \bigwedge^2 \mathcal{L}_\bullet(\mathrm{F})$ was discovered by Goncharov (\cite{GoncharovMSRI}, \cite{Gon95B}, \cite{Gon01}); the definition was inspired by properties of mixed Hodge structures related to multiple polylogarithms. The Lie coalgebra $\mathcal{L}_\bullet(\mathrm{F})$ is filtered by depth; denote by $\mathcal{D}_d \mathcal{L}_\bullet(\mathrm{F}) $ the subspace spanned by polylogarithms of depth not greater than $d;$ let $\textup{gr}_d^{\mathcal{D}} \mathcal{L}_\bullet(\mathrm{F})$ be the associated graded space. The subspace $\mathcal{D}_1 \mathcal{L}_\bullet(\mathrm{F})$ spanned by classical polylogarithms $\Li_n^{\mathcal{L}}(a)$ is denoted by $\mathcal{B}_n(\mathrm{F}).$

Assume that $\Delta=\sum_{1\leq i\leq j}\Delta_{ij}$ for  $\Delta_{ij}\colon \mathcal{L}_{i+j}(\mathrm{F}) \longrightarrow \mathcal{L}_i(\mathrm{F}) \wedge \mathcal{L}_j(\mathrm{F})$. The truncated cobracket is a map $\overline{\Delta}\colon \mathcal{L}(\mathrm{F})\longrightarrow \bigwedge^2\mathcal{L}(\mathrm{F})$ defined by the formula $\overline{\Delta}=\sum_{2\leq i \leq j} \Delta_{ij}.$
In other words, $\overline{\Delta}$ is obtained from $\Delta$ by omitting the component $\mathcal{L}_{1}(\mathrm{F}) \wedge \mathcal{L}_{n-1}(\mathrm{F})$ from the cobracket. Denote by $\textup{coLie}_\bullet(V)$ the cofree graded Lie coalgebra cogenerated by a graded vector space $V$. 

By \cite[Proposition 4.1]{MR22}, the iterated   truncated cobracket 	$\overline{\Delta}^{[d-1]}$ vanishes on  $\mathcal{D}_{d-1}\mathcal{L}_\bullet(\mathrm{F})$ and defines a map 
\begin{align} \label{FormulaIteratedCoproduct}
\overline{\Delta}^{[d-1]}\colon \mathrm{gr}^\mathcal{D}_d\mathcal{L}_{\geq 2}(\mathrm{F}) \longrightarrow \textup{coLie}_d\bigg(\bigoplus_{n\geq 2}\mathcal{B}_n(\mathrm{F})\bigg)\,.
\end{align}

\begin{conjecture}[{Goncharov, \cite[Conjecture 7.6]{Gon01}}] \label{ConjectureDepth}	A linear combination of multiple polylogarithms has depth less than or equal to $d$ if and only if its $d$-th iterated truncated cobracket vanishes. Moreover,  the map $\overline{\Delta}^{[d-1]}$ for $d\geq 1$ is an isomorphism.
\end{conjecture}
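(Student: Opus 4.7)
The statement bundles three separate claims: (a) $\overline{\Delta}^{[d-1]}$ vanishes on $\mathcal{D}_{d-1}\mathcal{L}_{\geq 2}$, already established in \cite[Proposition~4.1]{MR22} and giving the ``only if'' direction; (b) its converse, that vanishing of $\overline{\Delta}^{[d-1]}\xi$ forces $\xi \in \mathcal{D}_{d-1}\mathcal{L}$, equivalently injectivity of the induced map on $\mathrm{gr}^{\mathcal{D}}_d\mathcal{L}_{\geq 2}$; and (c) surjectivity of that induced map onto $\textup{coLie}_d\bigl(\bigoplus_{n\geq 2}\mathcal{B}_n\bigr)$. I take (a) as given and treat (c) and (b) in turn.

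For surjectivity the plan is constructive. Fix the standard basis of $\textup{coLie}_d(V)$ indexed by Lyndon brackets whose leaves are decorated by generators $\{a\}_n \in \mathcal{B}_n$. For each bracket $\tau = [\{a_1\}_{n_1},\dots,\{a_d\}_{n_d}]$ I would exhibit an explicit depth-$d$ element $\xi_\tau \in \mathcal{L}$ whose iterated truncated cobracket equals $\tau$ modulo brackets with fewer distinct leaves. The natural building blocks are symmetrisations of $\Li_{n-d+1,1,\dots,1}(a_1,\dots,a_d)$ over permutations of the arguments and over $N$th roots thereof, following the template of \autoref{TheoremOne} and the generating-function identity~\eqref{eq:main1}. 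One then computes $\overline{\Delta}^{[d-1]}\xi_\tau$ via the shuffle-type formula for $\overline{\Delta}$ and verifies, by a Vandermonde-style triangularity argument analogous to the one concluding the proof of \autoref{TheoremOne}, that the resulting transition matrix from $\{\xi_\tau\}$ to the chosen basis of $\textup{coLie}_d$ is non-degenerate.

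For injectivity I would induct on the depth $d$, the base case $d=1$ being vacuous. Given $\xi \in \mathcal{D}_d\mathcal{L}$ with $\overline{\Delta}^{[d-1]}\xi = 0$, first apply $\overline{\Delta}$; by the inductive hypothesis each tensor factor already lies in $\mathcal{D}_{d-1}\mathcal{L}$, so $\overline{\Delta}\xi$ sits in $\bigwedge^2 \mathcal{D}_{d-1}\mathcal{L}$. Cofreeness of $\textup{coLie}$ should then produce an explicit depth-$(d-1)$ correction $\eta$ with $\overline{\Delta}(\xi-\eta)$ concentrated in $\mathcal{L}_1 \wedge \mathcal{L}_{\geq 2}$; a Poincar\'e--Birkhoff--Witt argument on the dual Hopf algebra would then force $\xi-\eta$ to be a sum of products of classical polylogarithms, hence of depth~$1$, completing the induction.

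The crux, and the step I expect to be essentially impassable with present tools, is the production of the correction $\eta$ in the injectivity argument. It requires that $\mathcal{L}_{\geq 2}$ be genuinely cofree over its classical-polylogarithm part: equivalently, that there exist enough functional equations among multiple polylogarithms to reduce depth whenever the iterated cobracket obstruction vanishes. No unconditional proof of this is known for any $d \geq 2$; it constitutes the heart of the Zagier--Goncharov programme, and any such proof would produce substantial new families of functional equations. A realistic article therefore proves (a) and (c) unconditionally and leaves (b) as the open content of the conjecture, which is consistent with the abstract's claim of establishing only the surjectivity part.
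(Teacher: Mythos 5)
Your final assessment is the right one: the statement you were asked to prove is a \emph{conjecture} (Goncharov's depth conjecture), the paper does not prove it, and only the surjectivity part is established unconditionally (as \autoref{TheoremTwo}, under the hypothesis that $\mathrm{F}$ is quadratically closed --- a hypothesis you use implicitly via ``$N$th roots'' but never state). So your closing paragraph, identifying injectivity as the open content, is exactly consistent with what the paper actually does.

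On the two sketches: your surjectivity plan is in the same spirit as the paper's proof of \autoref{TheoremTwo} but is more elaborate than necessary. The paper does not need Lyndon bases or symmetrisation over permutations; it simply computes $\overline{\Delta}^{[d-1]}\bigl(\Li^{\mathcal{L}}_{n-d;1,\dots,1}(a_1,\dots,a_d)\bigr)=\sum\Li^{\mathcal{L}}_{n_1}(a_1)\otimes\cdots\otimes\Li^{\mathcal{L}}_{n_d}(a_d)$, then averages over $2^s$-th roots of the \emph{last} argument only, so that the distribution relations rescale the $n_d$-th factor by $2^{-s(n_d-1)}$; a Vandermonde argument in $s$ isolates each value of $n_d$, and iterating over the coordinates hits every pure tensor. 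Your injectivity sketch, by contrast, contains a genuine circularity: you invoke cofreeness of $\mathcal{L}_{\geq 2}(\mathrm{F})$ over $\bigoplus_n\mathcal{B}_n(\mathrm{F})$ to produce the correction $\eta$, but that cofreeness is equivalent to the conjecture you are trying to prove, and the final step (``$\overline{\Delta}(\xi-\eta)$ concentrated in $\mathcal{L}_1\wedge\mathcal{L}_{\geq 2}$ forces $\xi-\eta$ to be of depth $1$'') is precisely the $d=1$ case of the conjecture, which is open. The paper's \autoref{Corollary1} packages exactly this reduction as a \emph{conditional} statement: granting the $d=1$ case, your induction goes through and yields injectivity for all $d$. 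So your argument is not wrong so much as it is an accurate map of why the conjecture is hard, and you correctly refuse to claim more than the paper does.
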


We prove the surjectivity part of \autoref{ConjectureDepth}.

\begin{theorem}\label{TheoremTwo}
Assume that the field $\mathrm{F}$ is quadratically closed. Then the map
\[
\overline{\Delta}^{[d-1]}\colon \mathrm{gr}^\mathcal{D}_d\mathcal{L}_{\geq 2}(\mathrm{F}) \longrightarrow \textup{coLie}_d\bigg(\bigoplus_{n\geq 2}\mathcal{B}_n(\mathrm{F})\bigg)
\]
is surjective.
\end{theorem}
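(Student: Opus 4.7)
The plan is to build, for each generator of $\textup{coLie}_d\bigl(\bigoplus_{n\geq 2}\mathcal{B}_n(\mathrm{F})\bigr)$, an explicit preimage in $\mathrm{gr}^{\mathcal{D}}_d\mathcal{L}_{\geq 2}(\mathrm{F})$ built from multiple polylogarithms whose arguments may involve $N$-th roots (with $N$ a power of $2$). I would argue by induction on the weight $n$; the base case $n=2$ forces $d=1$ and gives the identity map $\mathrm{gr}^{\mathcal{D}}_1\mathcal{L}_2 = \mathcal{B}_2 = \textup{coLie}_1(V)_2$. Since $\textup{coLie}_d(V)$ embeds into $V^{\otimes d}$ for $V = \bigoplus_{n\geq 2}\mathcal{B}_n(\mathrm{F})$ and is spanned by length-$d$ Lie words in the cogenerators $\{a\}_m$ (with $a\in\mathrm{F}^\times$ and $m\geq 2$), it suffices to lift every such word $\omega = \{a_1\}_{n_1}\wedge_{\textup{Lie}}\dots\wedge_{\textup{Lie}}\{a_d\}_{n_d}$.

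For each $\omega$ I would propose a Lie-style antisymmetrization of multiple polylogarithms
\[
L_\omega \;=\; \sum_{\sigma\in S_d}\epsilon(\sigma)\,c_\sigma\,\Li^{\mathcal{L}}_{n_{\sigma(1)},\dots,n_{\sigma(d)}}\bigl(\phi_\sigma(a_1,\dots,a_d)\bigr)
\]
for suitable coefficients $c_\sigma\in\mathbb{Q}$ and Laurent-monomial arguments $\phi_\sigma$ in $\sqrt[N]{a_1},\dots,\sqrt[N]{a_d}$. Using Goncharov's formula for the motivic cobracket on iterated integrals, I would compute $\overline{\Delta}^{[d-1]}(L_\omega)$ and verify that its \emph{leading term} equals $\omega$. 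The residual terms should either contain a tensor factor of weight~$1$ (and hence vanish in $\textup{coLie}_d(V)$), or lie in $\mathcal{D}_{d-1}\mathcal{L}$ (and hence vanish in the associated graded), or come from elements already handled by the induction hypothesis at smaller weight. The quadratic-closedness hypothesis enters here to guarantee that the $\sqrt[N]{a_i}$ needed for the $\phi_\sigma$ actually lie in $\mathrm{F}^\times$, exactly analogously to the mechanism driving \autoref{TheoremOne}.

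The main obstacle is combinatorial: Goncharov's cobracket formula expands as a sum over "cut patterns" of an iterated integral, and after $d-1$ iterations the number of terms is large. Isolating the single Lie word $\omega$ from this expansion, and showing that every other contribution either vanishes, lies in $\mathcal{D}_{d-1}\mathcal{L}$, or is absorbed by the induction, requires careful book-keeping of depth versus weight in each tensor slot. A plausible auxiliary tool is a refinement of the depth filtration tracking the multiplicities of the arguments, combined with the shuffle and stuffle relations among the $\Li^{\mathcal{L}}$'s, which can be used to push unwanted terms down into $\mathcal{D}_{d-1}\mathcal{L}$.
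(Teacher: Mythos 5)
Your proposal correctly identifies the target (lifting each Lie word $\{a_1\}_{n_1}\wedge\dots\wedge\{a_d\}_{n_d}$), but the core of the argument --- showing that the residual terms of $\overline{\Delta}^{[d-1]}(L_\omega)$ vanish, drop in depth, or are absorbed by induction --- is precisely the part you defer as ``combinatorial book-keeping,'' and it is not clear it can be carried out for the general antisymmetrized ansatz you write down. The iterated truncated cobracket of $\Li^{\mathcal{L}}_{n_1,\dots,n_d}$ with generic exponents redistributes the weights among the tensor slots in a complicated way (the cut-pattern expansion does not respect the original composition $(n_1,\dots,n_d)$), so there is no evident ``leading term'' equal to $\omega$, and controlling the rest naively is close in difficulty to the depth conjecture itself. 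As written, the proposal is a plan with a genuine gap at its central step.

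The paper sidesteps all of this with two ideas your sketch is missing. First, it lifts only from the single family $\Li^{\mathcal{L}}_{n-d\,;\,1,\dots,1}(a_1,\dots,a_d)$, whose iterated truncated cobracket is the completely explicit sum $\sum_{n_1+\dots+n_d=n,\ n_i\geq 2}\Li^{\mathcal{L}}_{n_1}(a_1)\otimes\dots\otimes\Li^{\mathcal{L}}_{n_d}(a_d)$ over all compositions --- no permutations, no exotic arguments, no residual terms to control. Second, to extract an individual summand from this sum it replaces $a_d$ by its $2^s$-th roots and sums over them: the distribution relation $\Li_n^{\mathcal{L}}(a^r)=r^{n-1}\sum_{\zeta^r=1}\Li_n^{\mathcal{L}}(\zeta a)$ converts this into multiplication of the $n_d$-slot by $2^{-s(n_d-1)}$, and varying $s$ yields a Vandermonde system that isolates each value of $n_d$; iterating slot by slot isolates $\Li_{n_1}^{\mathcal{L}}(a_1)\otimes\dots\otimes\Li_{n_d}^{\mathcal{L}}(a_d)$, and these span the cofree Lie coalgebra. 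This is also where quadratic closedness actually enters --- not to make sense of arguments $\phi_\sigma$, but to guarantee that all $2^s$-th roots needed for the distribution relations exist in $\mathrm{F}^\times$. If you want to salvage your approach, replace the antisymmetrized ansatz by this single family together with the distribution/Vandermonde separation trick; no induction on weight is then needed.
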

\begin{proof}
It is easy to see that 
\[
\overline{\Delta}^{[d-1]}\Bigl(\Li_{n-d\,;\,1,\dots,1}^{\mathcal{L}}(a_1,\dots,a_d)\Bigr)=\sum_{\substack{n_1+n_2+
\dots+n_d=n\\n_i\geq 2}} \Li_{n_1}^{\mathcal{L}}(a_1)\otimes\dots \otimes \Li_{n_d}^{\mathcal{L}}(a_d).
\]
Recall that if $\mathrm{F}$ contains all degree $r$ roots of unity then classical polylogarithms $\Li_n(a)$ satisfy the following {\it distribution relations}:
\[
\Li_n^{\mathcal{L}}(a^r)=r^{n-1}\sum_{\zeta^r=1} \Li_n^{\mathcal{L}}(\zeta a)\,.
\]
It follows that for any $s\in \mathbb{N}$ 
\begin{align*}
&\overline{\Delta}^{[d-1]}\left ( \sum_{x^{2^s}=a_d} \Li_{n-d\,;\,1,\dots,1,1}^{\mathcal{L}}(a_1,\dots,a_{d-1},x) \right)\\
&\quad=\sum_{\substack{n_1+n_2+
\dots+n_d=n\\n_i\geq 2}} 2^{-s(n_d-1)}\Li_{n_1}^{\mathcal{L}}(a_1)\otimes\dots \otimes \Li_{n_d}^{\mathcal{L}}(a_d)
\\
&\quad =\sum_{2\leq n_d\leq n-2d+2} \Bigg (\sum_{\substack{n_1+n_2+
\dots+n_{d-1}=n-n_d\\n_i\geq 2}} \Li_{n_1}^{\mathcal{L}}(a_1)\otimes\dots \otimes \Li_{n_{d-1}}^{\mathcal{L}}(a_{d-1}) \Bigg )\otimes 2^{-s(n_d-1)}\Li_{n_d}^{\mathcal{L}}(a_d)\,.
\end{align*}
From the properties of the Vandermonde determinant it follows that  for every $n_d\geq 2$ the element
\[
 \Bigg (\sum_{\substack{n_1+n_2+
\dots+n_{d-1}=n-n_d\\n_i\geq 2}} \Li_{n_1}^{\mathcal{L}}(a_1)\otimes\dots \otimes \Li_{n_{d-1}}^{\mathcal{L}}(a_{d-1}) \Bigg) \otimes \Li_{n_d}^{\mathcal{L}}(a_d)
\]
lies in the image of $\overline{\Delta}^{[d-1]}$. Continuing in a similar fashion, we conclude that for every $n_1,\dots,n_d\in \mathbb{N}$ the element
\[
\Li_{n_1}^{\mathcal{L}}(a_1)\otimes\dots \otimes \Li_{n_d}^{\mathcal{L}}(a_d)
\]
lies in the image of $\overline{\Delta}^{[d-1]}$. From here the statement follows.
\end{proof}

Assume that Goncharov's depth conjecture holds. It follows from the proof of \autoref{TheoremTwo} that $\mathcal{L}_\bullet(\mathrm{F})$ is generated by functions $\Li_{n-d\,;\,1,\dots,1}(a_1,\dots,a_d).$ Shuffle relations for multiple polylogarithms imply that 
$\Li_{n-d\,;\,1,\dots,1}^{\mathcal{L}}$ can be expressed via $\Li_{0\,;\,n-d+1,\dots,1}^{\mathcal{L}}$ (corresponding to the function $\Li_{n-d+1,\dots,1}$) and functions of lower depth, so \autoref{ConjectureDepth} implies \autoref{ConjectureMain}. 

\Autoref{TheoremTwo} has the following striking corollary.

\begin{corollary} \label{Corollary1}
Let $\mathrm{F}$ be a quadratically closed field. Assume that \autoref{ConjectureDepth} holds for $d=1.$ Then it holds for all $d\geq 1$ and the Lie coalgebra $\mathcal{L}_{\geq 2}(\mathrm{F})$ with cobracket $\overline{\Delta}$ is cofree.
\end{corollary}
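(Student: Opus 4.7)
Setting $\mathcal{B}:=\bigoplus_{n\geq 2}\mathcal{B}_n(\mathrm{F})$, the plan is to construct a graded Lie coalgebra morphism
\[
\Psi\colon \mathcal{L}_{\geq 2}(\mathrm{F}) \longrightarrow \textup{coLie}(\mathcal{B})
\]
and to prove that it is an isomorphism; cofreeness of $(\mathcal{L}_{\geq 2}(\mathrm{F}),\overline{\Delta})$ is then tautological, and the remaining content of \autoref{ConjectureDepth} becomes a formal property of the cofree Lie coalgebra on the right. To build $\Psi$, I would choose a linear splitting $p\colon \mathcal{L}_{\geq 2}(\mathrm{F})\to \mathcal{B}$ of the inclusion $\mathcal{B}\hookrightarrow \mathcal{L}_{\geq 2}(\mathrm{F})$. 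Because $\overline{\Delta}$ strictly decreases the maximal weight of the wedge factors, the Lie coalgebra $(\mathcal{L}_{\geq 2}(\mathrm{F}),\overline{\Delta})$ is conilpotent, and the universal property of the cofree Lie coalgebra then produces a unique Lie coalgebra morphism $\Psi$ satisfying $\mathrm{pr}\circ\Psi=p$, where $\mathrm{pr}\colon\textup{coLie}(\mathcal{B})\twoheadrightarrow\mathcal{B}$ is the cogenerator projection.

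The hypothesis that \autoref{ConjectureDepth} holds at $d=1$ means precisely $\ker\overline{\Delta}=\mathcal{B}$, and this immediately gives $\Psi|_\mathcal{B}=\mathrm{id}$: for $b\in\mathcal{B}$ one has $\delta_{\textup{coLie}}\Psi(b)=(\Psi\wedge\Psi)\overline{\Delta}(b)=0$, so $\Psi(b)$ is a primitive of $\textup{coLie}(\mathcal{B})$; primitives of a cofree Lie coalgebra are exactly its cogenerators, so $\Psi(b)\in\mathcal{B}$, and then $\mathrm{pr}\,\Psi(b)=p(b)=b$ forces $\Psi(b)=b$. Injectivity of $\Psi$ now follows by induction on weight: if $x$ is a nonzero element of $\ker\Psi$ of minimal weight, then $(\Psi\wedge\Psi)\overline{\Delta}(x)=\delta_{\textup{coLie}}\Psi(x)=0$, and every wedge factor of $\overline{\Delta}(x)$ lies in strictly smaller weight, where $\Psi$ is injective by the inductive hypothesis; hence $\overline{\Delta}(x)=0$, the $d=1$ assumption places $x\in\mathcal{B}$, and $\Psi|_\mathcal{B}=\mathrm{id}$ together with $x\in\ker\Psi$ forces $x=\Psi(x)=0$, a contradiction.

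For surjectivity I would exploit the fact that $\Psi$ respects the depth filtration. Since $\overline{\Delta}^{[d]}$ vanishes on $\mathcal{D}_d\mathcal{L}_\bullet(\mathrm{F})$ by \cite[Proposition~4.1]{MR22} and $\Psi$ intertwines $\overline{\Delta}$ with $\delta_{\textup{coLie}}$, one obtains $\Psi(\mathcal{D}_d\mathcal{L}_{\geq 2})\subseteq \mathcal{D}_d\textup{coLie}(\mathcal{B})$. Unwinding the cofree formula for $\Psi$ (iterated applications of $\overline{\Delta}$ followed by $p$) and using $p|_\mathcal{B}=\mathrm{id}$, the induced map on associated graded
\[
\mathrm{gr}^{\mathcal{D}}_d\Psi\colon \mathrm{gr}^{\mathcal{D}}_d\mathcal{L}_{\geq 2}(\mathrm{F})\longrightarrow \textup{coLie}_d(\mathcal{B})
\]
is identified with $\overline{\Delta}^{[d-1]}$, which is surjective by \autoref{TheoremTwo}. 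A standard filtered argument upgrades graded surjectivity to surjectivity of $\Psi$, yielding a Lie coalgebra isomorphism $\mathcal{L}_{\geq 2}(\mathrm{F})\cong \textup{coLie}(\mathcal{B})$; this is exactly the statement that $\mathcal{L}_{\geq 2}(\mathrm{F})$ is cofree, and both claims of \autoref{ConjectureDepth} for all $d\geq 1$ are immediate because in a cofree Lie coalgebra the depth filtration coincides with the word-length filtration and is detected precisely by vanishing of the iterated cobracket.

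The main technical step is the identification $\mathrm{gr}^{\mathcal{D}}_d\Psi=\overline{\Delta}^{[d-1]}$: one has to verify that the iterated-cobracket recipe dictated by the universal property of $\textup{coLie}(\mathcal{B})$ produces $\overline{\Delta}^{[d-1]}$ on depth-$d$ representatives under the concrete embedding $\textup{coLie}_d(\mathcal{B})\hookrightarrow\mathcal{B}^{\otimes d}$, with consistent signs and antisymmetrisation conventions on both sides.
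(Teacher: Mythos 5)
Your proposal is correct in outline, but it takes a genuinely different route from the paper. The paper reduces the statement to showing that the map \eqref{FormulaIteratedCoproduct} on the associated graded is an isomorphism and argues by induction on the depth $d$: surjectivity is \autoref{TheoremTwo}, and for injectivity one takes $x$ with $\overline{\Delta}^{[d-1]}(x)=0$, uses that $\overline{\Delta}^{[\bullet]}$ is a morphism of Lie coalgebras together with the inductive hypothesis to conclude $\overline{\Delta}(x)\in\mathcal{D}_{d-1}\bigl(\bigwedge^2\mathcal{L}_{\geq 2}(\mathrm{F})\bigr)$, and then invokes the spectral sequence of the filtered complex $\bigwedge^\bullet\mathcal{L}_{\geq 2}(\mathrm{F})$ to know that $\mathcal{D}_{d-1}\mathcal{L}_{\geq 2}(\mathrm{F})$ is already cofree, so the cocycle $\overline{\Delta}(x)$ integrates to $\overline{\Delta}(y)$ with $y$ of depth at most $d-1$; the $d=1$ hypothesis then finishes. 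You instead lift the graded comparison to an honest morphism $\Psi\colon\mathcal{L}_{\geq 2}(\mathrm{F})\to\textup{coLie}\bigl(\bigoplus_{n\geq 2}\mathcal{B}_n(\mathrm{F})\bigr)$ via the universal property of the cofree conilpotent Lie coalgebra, prove injectivity of $\Psi$ by induction on weight (using only $\ker\overline{\Delta}=\mathcal{B}$ and injectivity of $\Psi\wedge\Psi$ in lower weight, which is legitimate since $\Psi$ preserves the weight grading), and obtain surjectivity from \autoref{TheoremTwo} on the associated graded. What your approach buys: the injectivity step avoids the spectral-sequence and cocycle-integration machinery entirely and never needs the intermediate cofreeness of $\mathcal{D}_{d-1}\mathcal{L}_{\geq 2}(\mathrm{F})$. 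What it costs: you must construct $\Psi$ (conilpotence, a choice of splitting $p$) and verify the identification $\mathrm{gr}^{\mathcal{D}}_d\Psi=\overline{\Delta}^{[d-1]}$, which you rightly flag as the technical heart; and when deducing both halves of \autoref{ConjectureDepth} you implicitly use that injectivity of $\Psi$, surjectivity of $\mathrm{gr}^{\mathcal{D}}\Psi$, and finiteness of the depth filtration in each weight together force $\Psi(\mathcal{D}_d\mathcal{L}_{\geq 2}(\mathrm{F}))$ to equal the word-length $\leq d$ part exactly --- this short filtered argument is worth spelling out. Both proofs ultimately draw their substance from the same two inputs, namely \autoref{TheoremTwo} and the $d=1$ hypothesis.
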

\begin{proof}
It is sufficient to prove that (\ref{FormulaIteratedCoproduct}) is an isomorphism: \autoref{ConjectureDepth} and cofreeness of $\mathcal{L}_{\geq 2}(\mathrm{F})$ would follow from the spectral sequence of the filtered complex $\bigwedge^{\bullet}\left(\mathcal{L}_{\geq 2}(\mathrm{F})\right)$;
the filtration on the complex is induced by the depth filtration on $\mathcal{L}_{\geq 2}(\mathrm{F})$. We argue by induction on $d;$ the base case $d=1$ is a tautology. Suppose that for $k\leq d-1$ the map $\overline{\Delta}^{[k-1]}$ is an isomorphism. By  \autoref{TheoremTwo}, it is sufficient to show that $\overline{\Delta}^{[d-1]}$ is injective.  Consider an element  $x\in \mathcal{D}_d\mathcal{L}_{\geq 2}(\mathrm{F})$ such that  $\overline{\Delta}^{[d-1]}(x)=0.$ The map 
\[
\overline\Delta^{[\bullet]}\colon  \mathrm{gr}^\mathcal{D}\mathcal{L}_{\geq 2}(\mathrm{F}) \longrightarrow \textup{coLie}\bigg(\bigoplus_{n\geq 2}\mathcal{B}_n(\mathrm{F})\bigg)
\]
is a morphism of Lie coalgebras, so
\begin{align}\label{FormulaCoproductDelta}
  \sum_{i+j=d}\overline{\Delta}^{[i-1]}\wedge \overline{\Delta}^{[j-1]}(\overline{\Delta}(x))=0.  
\end{align}
By the induction assumption, (\ref{FormulaCoproductDelta}) implies that $\overline{\Delta}(x)$ vanishes in $\bigwedge^2 \mathrm{gr}^\mathcal{D}\mathcal{L}_{\geq 2}(\mathrm{F})=\mathrm{gr}^\mathcal{D}\left(\bigwedge^2 \mathcal{L}_{\geq 2}(\mathrm{F})\right)$ so  
$
\overline{\Delta}(x)\in \mathcal{D}_{d-1}\bigl(\bigwedge^2 \mathcal{L}_{\geq 2}(\mathrm{F})\bigr).
$
The proof follows.
The spectral sequence of the filtered complex implies that Lie coalgebra $\mathcal{D}_{d-1} \mathcal{L}_{\geq 2}(\mathrm{F})$ with cobracket $\overline{\Delta}$ is cofree. Thus there exists $y\in \mathcal{D}_{d-1}\mathcal{L}_{\geq 2}(\mathrm{F})$ such that $\overline{\Delta}(x-y)=0,$ so $x-y\in \mathcal{D}_1\mathcal{L}_{\geq 2}(\mathrm{F})$ by the assumption that \autoref{ConjectureDepth} holds for $d=1$. It follows that $x\in \mathcal{D}_{d-1}\mathcal{L}_{\geq 2}(\mathrm{F}).$
\end{proof}
\medskip
\paragraph{\bf Acknowledgments:} We are very grateful to Alexander Goncharov for suggesting 
\autoref{Corollary1} as a consequence of \autoref{TheoremTwo}.
SC was supported by Deutsche Forschungsgemeinschaft Eigene Stelle grant CH 2561/1-1, for Projektnummer 442093436, HG is grateful to the MPIM Bonn for excellent working conditions.

\bibliographystyle{abbrv}      
\bibliography{surject_bib} 
\end{document}